\newcommand{\mybox}{\hfill $\square$}
\newcommand{\pli}[1]{\ensuremath{\overset{+}{\imath}}}
\newcommand{\sym}{\ensuremath{\mathrm{Sym}}}
\newcommand{\I}{\ensuremath{\mathcal{I}}}
\newcommand{\ip}[2]{\ensuremath{\langle{#1},{#2}\rangle}} 
\newcommand{\eps}{\varepsilon}
\newcommand{\qed}{\hfill $\square$ \\}
\newcommand{\zz}{\mathbb{Z}}
\newtheorem{thm}{Theorem}[section]
\newtheorem{lemma}[thm]{Lemma}         
\newtheorem{prop}[thm]{Proposition}
\newenvironment{proof}{\paragraph{Proof}}{\mybox}
\begin{document}
    \title{\Large\textbf{Involution Products in Coxeter Groups}}
\date{}
      \author{S.B. Hart and P.J. Rowley\thanks{The authors wish to acknowledge partial support for this work from
      the Birkbeck Faculty of Social
      Science and the Manchester Institute for Mathematical Sciences (MIMS).
      }}
  \maketitle
\vspace*{-10mm}
\begin{abstract} \noindent For $W$ a Coxeter group, let $\mathcal{W} =
\{ w \in W \;| \; w = xy \; \mbox{where} \; x, y \in W \; \mbox{and} \; x^2 = 1 = y^2 \}$. If $W$ is finite,
then it is well known that $W = \mathcal{W}$. Suppose that $w \in \mathcal{W}$. Then
the minimum value of $\ell(x) + \ell(y) - \ell(w)$, where $x, y \in W$ with $w = xy$
and $x^2 = 1 = y^2$, is called the \textit{excess} of $w$ ($\ell$
is the length function of $W$). The main result established here is that $w$
is always $W$-conjugate to an element with excess equal to zero. (MSC2000: 20F55)


\end{abstract}

\section{Introduction}\label{intro}

The study of a Coxeter group $W$ frequently weaves together
features of its root system $\Phi$ and properties of its length
function $\ell$. The delicate interplay between $\ell(w_1w_2)$ and
$\ell(w_1) + \ell(w_2)$ for various $w_1, w_2 \in W$ is often to
be seen in investigations into the structure of $W$. Instances of
the additivity of the length function, that is $\ell(w_1w_2) =
\ell(w_1) + \ell(w_2)$, are of particular interest. For example,
if $W_J$ is
a standard parabolic subgroup of
$W$, then there is a set $X_J$ of so-called distinguished right coset
representatives for $W_J$ in $W$ with the property that $\ell(wx)
= \ell(w) + \ell(x)$ for all $w \in W_J$, $x \in X_J$
(\cite{humphreys}, Proposition 1.10). There is a parallel
statement to this for double cosets of two standard parabolic
subgroups of $W$ (\cite{gkpf}, Proposition 2.1.7). Also, when $W$ is finite it
possesses an element $w_0$, the longest element of $W$, for which
$\ell(w_0) = \ell(w) + \ell(ww_0)$ for all $w \in W$ (\cite{gkpf},
Lemma 1.5.3).\\

 Of the involutions (elements of order $2$) in $W$,
the reflections, and particularly the fundamental reflections,
more often than not play a major role in investigating $W$. This
is due to there being a correspondence between the reflections in
$W$ and the roots in $\Phi$. In general, involutions occupy a
special position in a group and it is sometimes possible to say
more about them than it is about other elements of the group. This
is true in the case of Coxeter groups. For example, Richardson
\cite{richardson} gives an effective algorithm for
parameterizing the involution conjugacy classes of a Coxeter
group. In something of the same vein we have the fact that an
involution can be expressed as a canonical product of reflections
(see Deodhar \cite{deodhar} and Springer \cite{springer}). \\

Suppose that $W$ is a Coxeter group (not necessarily finite or
even of finite rank), and put
$$ \mathcal{W} =
\{ w \in W \;| \; w = xy \; \mbox{where} \; x, y \in W \; \mbox{and} \; x^2 = 1 = y^2 \}.$$

That is, $\mathcal{W}$ is the set of strongly real elements of
$W$. For $w \in \mathcal{W}$ we define the \textit{excess} of $w$,
$e(w)$, by

$$e(w) = \min\{\ell(x) + \ell(y) - \ell(w) \; | \; w=xy, x^2 = y^2 =
1\}.$$

Thus $e(w) = 0$ is equivalent to there existing $x, y \in W$ with $x^2 = 1 = y^2$,
 $w = xy$ and $\ell(w) = \ell(x) + \ell(y)$. We shall call $(x, y)$, where $x, y$ are
involutions, a {\sl spartan pair} for $w$ if $w = xy$ with
$\ell(x) + \ell(y) - \ell(w) = e(w)$. As a small example take  $w
= (1234)$ in  $\sym(4) \cong W(A_3)$ (the Coxeter group of type
$A_3$). Then $\ell(w) = 3$ and $w$ can be written in four ways as
a product of involutions (see Table \ref{t1}).

 \begin{table}[h!bt]\begin{center}\begin{tabular}{ccr}
$x$&$y$&$\ell(x)+\ell(y)$\\
\hline $(13)$&$(14)(23)$&$3+6=9$\\
$(14)(23)$&$(24)$&$6+3=9$\\
$(24)$&$(12)(34)$&$3+2=5$\\
$(12)(34)$&$(13)$&$2+3=5$ \end{tabular}\label{t1}\end{center}\caption{$w = (1234) = xy$}\end{table} %

Thus $e(w) = 2$ with $((24), (12)(34))$ and $((12)(34), (13))$ being spartan pairs for $w$.
To give some idea of the
distribution of excesses we briefly mention two other examples. The
number of elements with excess 0, 2, 4, 6, 8 in $\sym(6) \cong
W(A_5)$ is, respectively, 489, 173, 46, 10, 2, the maximum excess
being 8. For $\sym(7) \cong W(A_6)$, the number of elements with
excess 0, 2, 4, 6, 8, 10, 12 is, respectively, 2659, 1519, 574,
228, 50, 8, 2 and here the maximum excess is 12.\\

In the case $W$ is finite we have $W = \mathcal{W}$, and so excess
is defined for every element of $W$. (Since $W$ is a direct
product of irreducible Coxeter groups, it suffices to check this
for $W$ an irreducible Coxeter group -- if $W$ is a Weyl group see
Carter \cite{carter}. The case when $W$ is a dihedral group is
straightforward to verify while types $H_3$ or $H_4$ may be
checked using \cite{magma}.) However if $W$ is infinite we can
have $W \ne \mathcal{W}$. This can be seen when $W$ is of type
$\widetilde{A_2}$. Then $W = HN$, the semidirect product of $N
=\{(\lambda_1,\lambda_2,\lambda_3)| \lambda_i \in \zz, \lambda_1 +
\lambda_2 + \lambda_3 = 0 \} \cong \zz \times \zz$ and $H \cong
W(A_2) \cong \sym(3)$ with $H$ acting on $N$ by permuting the
co-ordinates of $(\lambda_1,\lambda_2,\lambda_3)$. Let $g = (12)
\in H$ and $0 \ne \lambda \in \zz$, and set $w =
g(\lambda,\lambda,-2\lambda)$. Clearly $g$ and
$(\lambda,\lambda,-2\lambda)$ commute and
$(\lambda,\lambda,-2\lambda)$ has infinite order. Therefore $w$
has infinite order. If $w$ can be written as a product of two
involutions, then there exist $hm, kn \in W$ with $h, k \in H$,
$m, n \in N$ and $(hm)(kn) = w$. Therefore $h, k$ are self-inverse
elements of $H$ with $hk = (12)$. So one of $h$ or $k$ is $(12)$
and the other is the identity. But $N$ has no elements of order 2,
so either $hm$ or $kn$ is the identity, contradicting the fact
that $w$ is not an involution. So we conclude that $w$ cannot be
expressed as a
product of two involutions and hence $W \ne \mathcal{W}$.\\


As we have observed a Coxeter group may have many elements with non-zero excess.
Nevertheless our main theorem shows the zero excess elements are ubiquitous
from a conjugacy class viewpoint.

\begin{thm}\label{exc=0}
Suppose $W$ is a Coxeter group, and let $w \in \mathcal{W}$.
Let $X$ denote the $W$-conjugacy class of $w$. Then there exists $w_{\ast} \in X$
such that $e(w_{\ast}) = 0$.
\end{thm}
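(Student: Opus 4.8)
The plan is to reduce, via standard parabolic subgroups and the geometry of the root system, to a situation where we can explicitly lower the excess by conjugation, and then iterate. Given $w \in \mathcal{W}$ with a spartan pair $(x,y)$, so $w = xy$, $x^2 = y^2 = 1$ and $\ell(x) + \ell(y) = \ell(w) + e(w)$, I would first look for a fundamental reflection $s = s_\alpha$ (with $\alpha$ a simple root) such that conjugating by $s$ strictly decreases $\ell(x) + \ell(y)$ while not increasing $\ell(w)$ by as much — more precisely, such that $e(sws) < e(w)$. The natural candidates come from examining when $\ell(sx) < \ell(x)$ or $\ell(xs) < \ell(x)$ and likewise for $y$: passing from $(x,y)$ to $(sxs, sys)$ is a spartan-type pair for $sws$, and the length changes are governed by whether $s$ sends certain roots in the ``inversion sets'' of $x$ and $y$ to negative roots. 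If for \emph{every} simple reflection no such decrease is available, one is in a ``reduced'' configuration, and the aim is to show that configuration already forces $e(w) = 0$.

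The key structural tool should be the canonical expression of an involution as a product of reflections (Deodhar \cite{deodhar}, Springer \cite{springer}): an involution $x$ is determined by the subspace it negates, and $\ell(x)$ equals the number of positive roots sent to negative roots, which has a clean description. Using this, I would analyze the ``overlap'' between the positive system moved by $x$ and that moved by $y$. The excess $e(w)$ measures precisely the failure of $\ell(xy) = \ell(x) + \ell(y)$, i.e.\ it is twice the number of positive roots $\beta$ with $x\beta > 0$ but $(xy)$ applied appropriately makes a sign clash; equivalently it counts roots in $N(x) \cap $ (an $x$-translate of $N(y^{-1}))$ or similar. I would try to show: if $e(w) > 0$ then some root $\beta$ witnessing the excess is simple after a suitable conjugation, and then conjugating by $s_\beta$ (or by the longest element of a small parabolic containing the relevant roots) removes it. Induction on $\ell(w)$, or on $\ell(x) + \ell(y)$ over all spartan pairs in the conjugacy class $X$, then closes the argument; one must check the induction is well-founded, which it is because $\ell$ takes values in $\nn$ and conjugation within $X$ keeps us in a fixed class while the quantity $\min\{\ell(x)+\ell(y) : xy \in X,\ x^2=y^2=1\}$ can be bounded.

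A subtlety worth isolating: $W$ is allowed to be infinite and of infinite rank, so any appeal to a longest element $w_0$ must be confined to a finite-rank standard parabolic subgroup containing $\supp(x)$ and $\supp(y)$ (these supports are finite since $\ell(x), \ell(y) < \infty$). Within that finite parabolic $W_J$ the element $w$ and its conjugates (by elements of $W_J$) behave as in the finite case, and crucially conjugation by $W_J$ does not change whether a given pair is spartan \emph{relative to the ambient $W$}, because lengths of elements supported in $J$ are the same computed in $W_J$ or in $W$. So without loss of generality $W$ is finite; this is the right first move and should be stated up front.

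The step I expect to be the main obstacle is the combinatorial heart: showing that when no single fundamental reflection decreases $\ell(x) + \ell(y)$ usefully, the configuration of the two inversion sets $N(x)$ and $N(y)$ is forced to be ``nested'' or ``disjoint'' enough that $\ell(xy) = \ell(x) + \ell(y)$ outright, or else that there is always \emph{some} conjugating element (not necessarily a fundamental reflection) that lowers the excess. I anticipate this requires a careful case analysis of rank-$2$ and rank-$3$ sub-configurations — essentially reducing the obstruction to dihedral and small cases where Table~\ref{t1}-style computations settle it — together with a descent argument organized around choosing, among all spartan pairs $(x,y)$ for all conjugates of $w$, one minimizing $\ell(x) + \ell(y)$ and deriving a contradiction from $e(w) > 0$ by exhibiting an explicit length-reducing conjugation. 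Getting the invariant for the induction exactly right, so that each conjugation provably makes progress and the process terminates, is where the real work lies.
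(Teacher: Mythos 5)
Your proposal is a plan rather than a proof: the step you yourself flag as ``the combinatorial heart'' --- showing that a configuration admitting no length-reducing conjugation forces $e(w)=0$ --- is exactly the content of the theorem, and it is left unexecuted. The paper closes this gap not by a case analysis of rank-$2$ and rank-$3$ sub-configurations but by two structural moves you are missing. First, an induction on the rank $|R|$ reduces at once to the case where the class $X$ is \emph{cuspidal} (meets no proper standard parabolic subgroup). Second, Richardson's theorem (Theorem \ref{involutions}) is used to normalize one of the two involutions: after conjugating, $y$ lies in $W_J$ and acts as $-1$ on $\Phi_J$, so $y$ is central in $W_J$ and $N(y)=\Phi_J^+$ exactly. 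One then minimizes the length of $z=g^{-1}xg$ over $g\in W_J$ only (this leaves $y$ fixed and keeps $zy$ in $X$), and the punchline is that a descent $r\in J$ of $z$ would force $rzr=z$ (via Proposition \ref{length}), hence $z\cdot\alpha_r=-\alpha_r=y\cdot\alpha_r$, hence $zy\cdot\alpha_r=\alpha_r$, placing $zy$ in a proper parabolic (Theorem \ref{tits} in the finite case, the longest-element argument of the corrigendum in general) --- contradicting cuspidality. Then $N(z)\cap N(y)=N(z)\cap\Phi_J^+=\emptyset$ and Lemma \ref{lengthadd} gives $\ell(zy)=\ell(z)+\ell(y)$. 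Without the cuspidality reduction, your descent argument has no contradiction to reach: the fact that some conjugate of $w$ fixes a root is not, by itself, any obstruction.

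A second, concrete error: from ``$\supp(x)$ and $\supp(y)$ are finite'' you conclude ``without loss of generality $W$ is finite.'' This only gives finite \emph{rank}, not finiteness; a finite-rank Coxeter group (e.g.\ type $\widetilde{A_2}$, discussed in the introduction) can be infinite and then has no longest element $w_0$. Any appeal to longest elements must be restricted to parabolics $W_K$ whose positive system $\Phi_K^+$ is known to be finite --- which is precisely the care taken in the corrigendum, where $\Phi_K^+\subseteq N(z)$ guarantees finiteness of $W_K$. Finally, your proposed induction on $\min\{\ell(x)+\ell(y)\}$ over all spartan pairs in the class is well-founded, but well-foundedness is not the issue: you never show the minimizing configuration has zero excess, and it is not true in general that a single fundamental reflection can always strictly lower $\ell(x)+\ell(y)$ when the excess is positive, so the descent you hope for may simply be unavailable.
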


Theorem \ref{exc=0} is proved in Section \ref{zero}, after gathering together a
number of preparatory results about Coxeter groups in Section \ref{back}. Also
some easy properties of excess are noted and, in Proposition \ref{12..n}, we
demonstrate that there are Coxeter groups in which elements can have arbitrarily
large excess.

\section{Background Results and Notation}\label{back}

Assume, for this section, that $W$ is a finite rank Coxeter group.
So, by its very definition, $W$ has a
presentation of the form

\[ W = \langle R \; | \; (rs)^{m_{rs}} = 1, r, s \in R\rangle \]
where $R$ is finite, $m_{rr} = 1$, $m_{rs} = m_{sr} \in
\mathbb{Z}^+\cup \{\infty\}$ and $m_{rs} \geq 2$ for $r, s \in R$
with $r\neq s$. The elements of $R$ are called the fundamental
reflections of $W$ and the rank of $W$ is the cardinality of $R$. The
length of an element $w$ of $W$, denoted by $\ell(w)$, is defined
to be
\[\ell(w) = \left\{ \begin{array}{l} \min\{ l \; | \; w=r_1r_2 \cdots r_l: r_i \in R\} {\mbox{ if $w \neq 1$}} \\
0 {\mbox{ if $w=1$}.} \end{array}\right.\]

Now let $V$ be a real vector space with basis $\Pi = \{ \alpha_r \;
| \; r \in R\}$. Define a symmetric bilinear form $\ip{ \; }{ \; }$
on $V$ by
\[\ip{\alpha_r}{\alpha_s} =  -\cos\left(\frac{\pi}{m_{rs}}\right)
.\]
\noindent where $r, s \in R$ and the $m_{rs}$ are as in the above
presentation of $W$. Letting $r, s \in R$ we define
\[ r\cdot\alpha_s = \alpha_s - 2\ip{\alpha_r}{\alpha_s}\alpha_r.\]
This then extends to an action of $W$ on $V$ which is both
faithful and respects the bilinear form $\ip{ \; }{ \; }$ (see 5.3
of \cite{humphreys}) and the elements of $R$ act as reflections
upon $V$. The module $V$ is called a reflection module for $W$ and
the following subset of $V$
\[\Phi = \{w\cdot\alpha_r \; | \; r \in R, w \in W\}\]
is the all important root system of $W$. Setting $\Phi^+ =
\{\sum_{r\in R}\lambda_r\alpha_r \in \Phi \; | \; \lambda_r \geq 0
{\mbox{ for all $r$}}\}$ and $\Phi^- = -\Phi^+$ we have the
fundamental fact that $\Phi$ is the disjoint union $\Phi^+
\dot\cup \Phi^-$ (see 5.4 -- 5.6 of \cite{humphreys}). The sets
$\Phi^+$ and $\Phi^-$ are referred to, respectively, as the
positive and negative roots of $\Phi$.

For $w \in W$ we define
\[N(w) = \{ \alpha \in \Phi^+ \; | \; w\cdot\alpha \in \Phi^- \}.\]
The connection between $\ell(w)$ and the root system of $W$ is contained
in our next lemma.

\begin{lemma}\label{millie}Let $w \in W$ and $r \in R$.
\begin{trivlist}
\item (i) If $\ell(wr) > \ell(w)$ then $w\cdot \alpha_r \in
\Phi^+$ and if $\ell(wr) < \ell(w)$ then $w\cdot \alpha_r \in
\Phi^-$. In particular, $\ell(wr) < \ell(w)$ if and only if
$\alpha_r \in N(w)$. \item (ii) $\ell(w) = |N(w)|$.
\end{trivlist}
\end{lemma}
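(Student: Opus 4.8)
The plan is to isolate the two elementary facts that drive everything, then prove the key implication of part (i) by induction on $\ell(w)$, deduce the remaining assertions of (i) formally, and finally obtain (ii) by a counting/bijection argument. First I would record the geometric fact that each $r \in R$ sends $\alpha_r$ to $-\alpha_r$ and permutes $\Phi^+ \setminus \{\alpha_r\}$: writing a positive root as $\sum_t \lambda_t \alpha_t$ with all $\lambda_t \ge 0$, the formula $r\cdot\alpha_s = \alpha_s - 2\ip{\alpha_r}{\alpha_s}\alpha_r$ shows that $r$ alters only the coefficient of $\alpha_r$, so any positive root other than $\alpha_r$ retains a strictly positive coefficient and stays positive. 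I would also record the parity fact: the assignment $r \mapsto -1$ respects the defining relations and so extends to a homomorphism $W \to \{\pm 1\}$, which must equal $(-1)^{\ell(w)}$; together with the trivial bound $|\ell(wr)-\ell(w)| \le 1$ this forces $\ell(wr) = \ell(w) \pm 1$, and in particular $\ell(wr) \ne \ell(w)$.

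Next I would prove, by induction on $\ell(w)$, the single implication
\[ \ell(wr) > \ell(w) \ \Longrightarrow\ w\cdot\alpha_r \in \Phi^+ \qquad (\ast) \]
The case $w = 1$ is immediate. For $\ell(w) \ge 1$ I would choose $s \in R$ with $\ell(sw) < \ell(w)$ and set $v = sw$, so that $w = sv$ and $\ell(v) = \ell(w) - 1$. If $\ell(vr) < \ell(v)$ then $wr = s(vr)$ has length at most $\ell(vr)+1 \le \ell(w)-1$, contradicting the hypothesis; hence $\ell(vr) > \ell(v)$, and the inductive hypothesis (available since $\ell(v) < \ell(w)$) yields $v\cdot\alpha_r \in \Phi^+$. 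Then $w\cdot\alpha_r = s\cdot(v\cdot\alpha_r)$ is positive by the geometric fact \emph{unless} $v\cdot\alpha_r = \alpha_s$. Excluding this last possibility is the step I expect to be the main obstacle. Here I would exploit that the action is faithful and respects the form: if $v\cdot\alpha_r = \alpha_s$, then $vrv^{-1}$ and $s$ act as the same reflection of $V$, so $s = vrv^{-1}$ in $W$, giving $w = sv = vr$ and hence $wr = v$ with $\ell(wr) = \ell(w)-1$ — again contradicting $\ell(wr) > \ell(w)$. This establishes $(\ast)$.

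From $(\ast)$ the rest of (i) is formal. Applying $(\ast)$ to the pair $(wr, r)$, the inequality $\ell(wr) < \ell(w) = \ell((wr)r)$ forces $(wr)\cdot\alpha_r = -\,w\cdot\alpha_r \in \Phi^+$, i.e.\ $w\cdot\alpha_r \in \Phi^-$. Combining $(\ast)$, this consequence, and the dichotomy $\ell(wr) \ne \ell(w)$ then gives the biconditional $\ell(wr) < \ell(w) \iff w\cdot\alpha_r \in \Phi^- \iff \alpha_r \in N(w)$.

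Finally I would deduce (ii), $\ell(w) = |N(w)|$, by induction on $\ell(w)$, the case $w = 1$ being clear. For $\ell(w) \ge 1$ pick $r \in R$ with $\ell(wr) = \ell(w) - 1$. The involution $\beta \mapsto r\cdot\beta$ permutes $\Phi^+ \setminus \{\alpha_r\}$, and since $r^2 = 1$ one has $wr\cdot(r\cdot\beta) = w\cdot\beta$, so for $\beta \in \Phi^+ \setminus \{\alpha_r\}$ it follows that $\beta \in N(w)$ if and only if $r\cdot\beta \in N(wr)$. Since $\alpha_r \in N(w)$ by (i) while $\alpha_r \notin N(wr)$ (as $wr\cdot\alpha_r = -\,w\cdot\alpha_r \in \Phi^+$), this restricts to a bijection of $N(w) \setminus \{\alpha_r\}$ onto $N(wr)$, whence $|N(w)| = |N(wr)| + 1 = \ell(wr) + 1 = \ell(w)$ by the inductive hypothesis, completing the proof.
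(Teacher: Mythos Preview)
Your proof is correct and is precisely the standard argument from Humphreys, \emph{Reflection Groups and Coxeter Groups}, \S5.4--5.6; the paper does not give its own proof of this lemma but simply cites those sections, so your write-up supplies exactly the details the paper defers to the reference.
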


\begin{proof}See Sections 5.4 and 5.6 of \cite{humphreys}.
\end{proof}

\begin{lemma} \label{lengthadd} Let $g, h \in W$. Then
$$N(gh) = N(h)\setminus [-h^{-1}N(g)] \cup h^{-1}[N(g)\setminus
N(h^{-1})].$$ %
Hence $\ell(gh) = \ell(g) + \ell(h) -2\mid N(g)\cap
N(h^{-1})\mid$.\end{lemma}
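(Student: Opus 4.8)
The plan is to compute, for a fixed $\alpha \in \Phi^+$, whether $\alpha \in N(gh)$ by splitting according to the sign of $h\cdot\alpha$, and then to reassemble the two cases into the claimed description of $N(gh)$. If $\alpha \notin N(h)$, then $h\cdot\alpha \in \Phi^+$, and applying the ``in particular'' part of Lemma~\ref{millie} (or rather its definitional reformulation $\ell(v) = |N(v)|$ together with $v\cdot\beta \in \Phi^-$ iff $\beta \in N(v)$) to $g$ shows $gh\cdot\alpha = g\cdot(h\cdot\alpha) \in \Phi^-$ exactly when $h\cdot\alpha \in N(g)$; since both $\alpha$ and $h\cdot\alpha$ are positive, this says precisely $h\cdot\alpha \in N(g)\setminus N(h^{-1})$, i.e. $\alpha \in h^{-1}[N(g)\setminus N(h^{-1})]$. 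If instead $\alpha \in N(h)$, write $h\cdot\alpha = -\beta$ with $\beta \in \Phi^+$; then $gh\cdot\alpha = -g\cdot\beta$, which lies in $\Phi^-$ exactly when $g\cdot\beta \in \Phi^+$, i.e. when $\beta = -h\cdot\alpha \notin N(g)$, i.e. when $\alpha \notin -h^{-1}N(g)$. Taking the union of the two cases yields $N(gh) = \big(N(h)\setminus[-h^{-1}N(g)]\big)\cup h^{-1}[N(g)\setminus N(h^{-1})]$, which is the displayed formula.

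For the length identity I would first note that the two pieces of $N(gh)$ are disjoint: the first consists of roots $\alpha$ with $h\cdot\alpha \in \Phi^-$, whereas every root in the second satisfies $h\cdot\alpha \in N(g)\subseteq \Phi^+$. Hence $\ell(gh) = |N(gh)|$ is the sum of the two cardinalities. Since $N(h^{-1})\subseteq\Phi^+\subseteq$ never meets $\Phi^-$ and $h^{-1}$ is a bijection of $\Phi$, the second piece has size $|N(g)\setminus N(h^{-1})| = |N(g)| - |N(g)\cap N(h^{-1})| = \ell(g) - |N(g)\cap N(h^{-1})|$. For the first piece, $|N(h)\setminus[-h^{-1}N(g)]| = \ell(h) - |N(h)\cap[-h^{-1}N(g)]|$, and the map $\alpha \mapsto -h\cdot\alpha$ is a bijection from $N(h)\cap[-h^{-1}N(g)]$ onto $N(g)\cap N(h^{-1})$: one checks directly that $\alpha \in N(h)$ forces $-h\cdot\alpha \in \Phi^+$, that $\alpha \in -h^{-1}N(g)$ is equivalent to $-h\cdot\alpha \in N(g)$, and that $\alpha \in N(h)$ is equivalent to $-h\cdot\alpha \in N(h^{-1})$ (because $h^{-1}\cdot(-h\cdot\alpha) = -\alpha$), with inverse map $\delta \mapsto -h^{-1}\cdot\delta$. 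Adding the two sizes gives $\ell(gh) = \ell(g)+\ell(h) - 2|N(g)\cap N(h^{-1})|$.

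Everything here is bookkeeping built on Lemma~\ref{millie} and the facts that each $w\in W$ permutes $\Phi$ and that $\Phi = \Phi^+ \cup \Phi^-$ is a disjoint partition; the only real obstacle is keeping the signs and the directions of $h$ versus $h^{-1}$ straight, especially since the stated formula uses a specific left-to-right bracketing of $\setminus$ and $\cup$ and mixes sets of positive roots ($N(h)$, $N(g)$, $N(h^{-1})$) with sets that need not be positive ($h^{-1}N(g)$ and $-h^{-1}N(g)$). To avoid any ambiguity I would carry out the case analysis entirely in terms of the two test conditions ``$\alpha \in \Phi^+$'' and ``sign of $h\cdot\alpha$'', and only rewrite the conclusion in the displayed form at the very end.
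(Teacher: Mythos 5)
Your proposal is correct and follows essentially the same route as the paper: split on whether $\alpha \in N(h)$, identify $N(gh)\cap N(h)$ and $N(gh)\setminus N(h)$ with the two displayed pieces, and then count. The only difference is that you spell out the bijection $\alpha \mapsto -h\cdot\alpha$ behind the factor $|N(g)\cap N(h^{-1})|$, which the paper leaves implicit.
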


\begin{proof} Let $\alpha \in \Phi^+$. Suppose $\alpha \in N(h)$.
Then $gh\cdot \alpha = g\cdot (h\cdot \alpha)$ is negative if and
only if $-h\cdot\alpha \notin N(g)$. That is, $\alpha \notin
-h^{-1}N(g)$. Thus
$$N(gh) \cap N(h) = N(h)\setminus  -h^{-1}N(g).$$
If on the other hand $\alpha \notin N(h)$, then $gh \cdot \alpha
\in \Phi^{-}$ if and only if $h\cdot\alpha \in N(g)$. That is,
$\alpha \in \Phi^+ \cap h^{-1} N(g)$. Thus
$$N(gh) \setminus N(h) = h^{-1}[N(g)\setminus
N(h^{-1})].$$ Combining the two equations gives the expression for $N(gh)$ stated in
Lemma \ref{lengthadd}. Since $N(gh) \cap N(h)$ and $N(gh) \setminus N(h)$ are clearly
disjoint, using Lemma \ref{millie}(ii) we deduce that
$$\ell(gh) = \ell(g) + \ell(h) - 2\mid N(g)\cap N(h^{-1})\mid,$$
which completes the proof of the lemma.
\end{proof}\\

\begin{prop} \label{length} Let $w \in W$ and $r \in R$. If $\ell(rw)< \ell(w)$ and $\ell(wr)<\ell(w)$, then either
$rwr=w$ or $\ell(rwr) = \ell(w)-2$. \end{prop}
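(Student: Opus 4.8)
The plan is to analyse the two hypotheses $\ell(rw) < \ell(w)$ and $\ell(wr) < \ell(w)$ via Lemma \ref{millie}. The condition $\ell(wr) < \ell(w)$ gives $\alpha_r \in N(w)$, i.e. $w \cdot \alpha_r \in \Phi^-$. The condition $\ell(rw) < \ell(w)$ says $\ell(rw) = \ell(w) - 1$, and applying Lemma \ref{millie}(i) to $w^{-1}$ (since $\ell(rw) = \ell((w^{-1}r)^{-1}) = \ell(w^{-1}r)$) gives $w^{-1} \cdot \alpha_r \in \Phi^-$, i.e. $\alpha_r \in N(w^{-1})$. So both $\alpha_r \in N(w)$ and $\alpha_r \in N(w^{-1})$.

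Next I would compute $\ell(rwr)$ using Lemma \ref{lengthadd}. Write $rwr = (rw)r$. Since $r$ is an involution, $r^{-1} = r$ and $N(r) = \{\alpha_r\}$. Lemma \ref{lengthadd} gives $\ell((rw)r) = \ell(rw) + \ell(r) - 2|N(rw) \cap N(r)| = \ell(rw) + 1 - 2|N(rw)\cap\{\alpha_r\}|$. So the question reduces to whether $\alpha_r \in N(rw)$, equivalently whether $rw \cdot \alpha_r \in \Phi^-$. Now $rw \cdot \alpha_r = r \cdot (w \cdot \alpha_r)$. We know $w \cdot \alpha_r \in \Phi^-$; write $\beta = -w\cdot\alpha_r \in \Phi^+$. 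Then $rw\cdot\alpha_r = -r\cdot\beta$, which is negative iff $r \cdot \beta \in \Phi^+$ iff $\beta \neq \alpha_r$ (the only positive root sent negative by $r$ is $\alpha_r$ itself, by Lemma \ref{millie}(i) applied with $w = 1$, or directly from $\Phi^+ \dot\cup \Phi^-$ considerations).

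So there are two cases. If $\beta \neq \alpha_r$, then $\alpha_r \in N(rw)$, and the formula gives $\ell(rwr) = \ell(rw) + 1 - 2 = \ell(rw) - 1 = \ell(w) - 2$, as desired. If $\beta = \alpha_r$, i.e. $w \cdot \alpha_r = -\alpha_r$, then $\alpha_r \notin N(rw)$ and the formula gives $\ell(rwr) = \ell(rw) + 1 = \ell(w)$, so we are not in the length-dropping case; I must then show $rwr = w$. From $w \cdot \alpha_r = -\alpha_r = r \cdot \alpha_r$ we get $r^{-1}w \cdot \alpha_r = \alpha_r$, so $rw$ fixes $\alpha_r$; but I want $rwr = w$, i.e. $wr = rw$, i.e. $r$ and $w$ commute. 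The cleaner route is to observe $rw$ fixes $\alpha_r$ and hence $rw$ fixes the reflection $r$ by conjugation — more directly, from $w\cdot\alpha_r = r\cdot\alpha_r$ one deduces $(r w)\cdot \alpha_r = \alpha_r$, and since a reflection is determined by its root, $(rw) r (rw)^{-1}$ is the reflection along $(rw)\cdot\alpha_r = \alpha_r$, which is $r$ itself; thus $rwr w^{-1}r = r$, giving $rwrw^{-1} = 1$... let me instead just note $wrw^{-1}$ is the reflection along $w\cdot\alpha_r = -\alpha_r$, which is again $r$, so $wrw^{-1} = r$, i.e. $wr = rw$, hence $rwr = r^2 w = w$.

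The main obstacle is handling the degenerate case $w \cdot \alpha_r = -\alpha_r$ correctly and recognising that it is exactly the case $rwr = w$; the key fact needed there is that distinct reflections have distinct roots (up to sign), i.e. the map from reflections to $\pm$-pairs of roots is injective, which lets us pass from $w \cdot \alpha_r = \pm\alpha_r$ to $w r w^{-1} = r$. Everything else is a direct application of Lemmas \ref{millie} and \ref{lengthadd}. One should also double-check the edge possibility that $\ell(rw)$ or $\ell(wr)$ computations do not secretly force $w = 1$, but the hypotheses already exclude that since $\ell(w) > \ell(wr) \geq 0$.
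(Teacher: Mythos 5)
Your proof is correct. The paper itself gives no argument for this proposition --- it simply cites Exercise 3 in Section 5.8 of Humphreys --- so there is nothing internal to compare against; what you have written is a complete, self-contained solution of that exercise using only the paper's Lemma \ref{millie} and Lemma \ref{lengthadd}. The reduction of everything to the single question of whether $\alpha_r \in N(rw)$, via $\ell(rwr) = \ell(rw) + 1 - 2|N(rw)\cap\{\alpha_r\}|$, is exactly the right move, and your case split on whether $w\cdot\alpha_r = -\alpha_r$ cleanly separates the two conclusions. The auxiliary facts you invoke are all standard and consistent with the paper's toolkit: $N(r) = \{\alpha_r\}$ follows from Lemma \ref{millie} since $\ell(r)=1$; $\ell(rw) = \ell(w)-1$ (rather than merely $< \ell(w)$) is the usual parity fact for multiplication by a generator; and $wrw^{-1} = s_{w\cdot\alpha_r}$ together with $s_{-\alpha} = s_{\alpha}$ is the standard conjugation formula for reflections (Humphreys 5.7), which is also implicitly used in the paper's proof of Theorem \ref{tits} and in the main proof where ``the only roots $\beta$ for which $r\cdot\beta = -\beta$ are $\pm\alpha_r$'' is cited. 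The only cosmetic remark: the deduction $\alpha_r \in N(w^{-1})$ from $\ell(rw)<\ell(w)$ is never actually used --- only the length equality $\ell(rw)=\ell(w)-1$ is needed --- and the brief false start in the degenerate case could be deleted, but neither affects correctness.
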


\begin{proof} See Exercise 3 in 5.8 of \cite{humphreys}.
\end{proof}\\

For $J \subseteq R$ define $W_J$ to be the subgroup of $W$
generated by $J$. Such a subgroup of $W$ is referred to as a
standard parabolic subgroup. Standard parabolic subgroups are
Coxeter groups in their own right with root system
\[\Phi_J = \{w\cdot\alpha_{r} \; | \; r \in J, w \in W_J\}\]
(see Section 5.5 of \cite{humphreys} for more on this). A
conjugate of a standard parabolic subgroup is called a parabolic
subgroup of $W$. Finally, a {\sl cuspidal} element of $W$ is an
element which is not contained in any proper parabolic subgroup of
$W$. Equivalently, an element is cuspidal if its $W$-conjugacy
class has empty intersection with all the proper standard
parabolic subgroups of $W$.

\begin{thm} \label{tits}  Let $0 \neq v \in V$.
Then the stabilizer of $v$ in $W$ is a parabolic subgroup of $W$.
Furthermore, if $v \in \Phi$, then the stabilizer of $v$ in $W$ is
a proper parabolic subgroup of $W$.
\end{thm}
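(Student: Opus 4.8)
The plan is to reduce the statement to finding a well-placed representative of the $W$-orbit of $v$ and then to run a length-induction using Lemma \ref{millie}. Call a vector $u \in V$ \emph{dominant} if $\ip{u}{\alpha_r} \geq 0$ for every $r \in R$. Since $W$ preserves the form $\ip{\ }{\ }$, whenever $u = g \cdot v$ we have $\stab(v) = g^{-1}\,\stab(u)\,g$, so a parabolic stabilizer for $u$ forces one for $v$. Thus the first goal is to conjugate $v$ into the closed dominant cone $\overline{D} = \{u \in V \mid \ip{u}{\alpha_r} \geq 0 \text{ for all } r \in R\}$, and the second is to identify the stabilizer of a dominant vector as a standard parabolic subgroup.

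For the second goal, suppose $u \in \overline{D}$ and set $J = \{ r \in R \mid \ip{u}{\alpha_r} = 0\}$. Since $r \cdot u = u - 2\ip{\alpha_r}{u}\alpha_r$, membership $r \in J$ is equivalent to $r \cdot u = u$, so certainly $W_J \leq \stab(u)$; I claim equality. Let $w \in \stab(u)$ and induct on $\ell(w)$. If $w \neq 1$, choose $r \in R$ with $\ell(rw) < \ell(w)$; applying Lemma \ref{millie}(i) to $w^{-1}$ gives $w^{-1}\cdot\alpha_r \in \Phi^-$, say $w^{-1}\cdot \alpha_r = -\sum_{s} c_s \alpha_s$ with all $c_s \geq 0$. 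Using $w^{-1}\cdot u = u$ and invariance of the form, $\ip{u}{\alpha_r} = \ip{u}{w^{-1}\cdot\alpha_r} = -\sum_s c_s \ip{u}{\alpha_s} \leq 0$ by dominance, while dominance also gives $\ip{u}{\alpha_r} \geq 0$. Hence $\ip{u}{\alpha_r} = 0$, so $r \in J$ and $r \in \stab(u)$; then $rw \in \stab(u)$ with $\ell(rw) < \ell(w)$, and induction yields $rw \in W_J$, whence $w \in W_J$. Therefore $\stab(u) = W_J$, and $\stab(v) = g^{-1}W_J\, g$ is parabolic.

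The main obstacle is the first goal, that the $W$-orbit of $v$ actually meets $\overline{D}$. When $W$ is finite this is immediate: the orbit is finite, and a vector $u$ of the orbit minimising $\#\{\alpha \in \Phi^+ \mid \ip{u}{\alpha} < 0\}$ must be dominant, since otherwise some $\ip{u}{\alpha_r}<0$ and $r \cdot u$ would have a strictly smaller such count (as $r$ permutes $\Phi^+ \setminus \{\alpha_r\}$ and sends $\alpha_r$ to $-\alpha_r$). For infinite $W$ the dominant cone is only a fundamental domain for the part of $V$ lying in the Tits cone, and not every orbit meets $\overline{D}$; supplying this reduction is exactly the content of Tits's theorem on the Tits cone (see Chapter 5 of \cite{humphreys}). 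For a vector whose orbit avoids $\overline{D}$ one argues separately that $\stab(v)$ is generated by the reflections it contains and that these generate a parabolic subgroup; this boundary case is where the genuine difficulty lies, and it is the step for which I would lean on the cited Tits-cone machinery.

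Finally, the refinement for $v \in \Phi$ is easy given the above. Each simple root satisfies $\ip{\alpha_r}{\alpha_r} = -\cos \pi = 1$, and since $W$ preserves the form every root $\alpha$ has $\ip{\alpha}{\alpha} = 1 \neq 0$. The reflection $s_\alpha \in W$ in $\alpha$ therefore sends $\alpha \mapsto \alpha - 2\ip{\alpha}{\alpha}\alpha = -\alpha \neq \alpha$, so $s_\alpha \notin \stab(\alpha)$; in particular $\stab(\alpha) \neq W$. Since a parabolic subgroup equal to a conjugate of $W_J$ coincides with $W$ only when $J = R$, a parabolic subgroup that is not all of $W$ is proper, and hence $\stab(\alpha)$ is a proper parabolic subgroup, completing the plan.
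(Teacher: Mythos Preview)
Your properness argument (exhibit $s_\alpha$ with $s_\alpha\cdot\alpha=-\alpha\neq\alpha$) is exactly what the paper does; the paper phrases it as $v=w\cdot\alpha_r$ and $(wrw^{-1})\cdot v=-v$, but it is the same observation. For the first assertion the paper gives no argument at all: it simply cites Bourbaki, Ch.~V, \S3.3. Your write-up is therefore strictly more informative than the paper's, and your dominant-chamber argument for finite $W$ is correct and cleanly executed.

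The genuine gap is in the ``boundary case'' you flag at the end. You write that for a vector whose orbit misses $\overline{D}$ one can still argue, via the Tits-cone machinery, that $\stab(v)$ is generated by the reflections it contains and that these generate a parabolic subgroup. This is not merely hard: it is false. The result in Bourbaki applies only to points of the Tits cone, and for vectors outside the Tits cone the stabilizer need not be parabolic. Indeed, the paper itself contains a corrigendum acknowledging precisely this: Theorem~\ref{tits} ``holds for finite Coxeter groups but is not true in general for infinite Coxeter groups'', and the authors had to repair the proof of Theorem~\ref{exc=0} so as to avoid invoking it. So you were right to sense that the difficulty concentrates there, but the honest conclusion is that the stated theorem is correct only under the additional hypothesis that $W$ is finite (or, more generally, that $v$ lies in the Tits cone); no amount of leaning on \cite{humphreys} or \cite{titsref} will close the gap in full generality.
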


\begin{proof}The fact that the stabilizer of $v$ is a parabolic subgroup is proved in Ch V \S 3.3 of
\cite{titsref}. If $v \in \Phi$, then $v = w\cdot \alpha_r$ for
some $r \in R, w \in W$ and hence $(wrw^{-1})\cdot v = -v$. Thus
the stabilizer of $v$ cannot be the whole of $W$, so is a proper
parabolic subgroup of $W$.
\end{proof}

\begin{thm}\label{involutions} Suppose that $w$ is an involution in $W$. Then there
exists $J \subseteq R$ such that $w$ is $W$-conjugate to $
w_J$, an element of $W_J$ which acts as $-1$ upon $\Phi_J$.
\end{thm}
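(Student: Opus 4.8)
The plan is to realise the required conjugate as an element of $X$ of minimal length. So I would first choose $w_{\ast}$ of minimal length in $X$; since every conjugate of an involution is an involution, $w_{\ast}^{2}=1$, and hence $\ell(w_{\ast}r)=\ell((w_{\ast}r)^{-1})=\ell(rw_{\ast})$ for every $r\in R$. Set $J=\{r\in R\mid \ell(rw_{\ast})<\ell(w_{\ast})\}$. For $r\in J$ we then have $\ell(rw_{\ast})<\ell(w_{\ast})$ and $\ell(w_{\ast}r)<\ell(w_{\ast})$, so Proposition \ref{length} forces $rw_{\ast}r=w_{\ast}$ or $\ell(rw_{\ast}r)=\ell(w_{\ast})-2$; the second possibility is excluded because $rw_{\ast}r\in X$ while $w_{\ast}$ has minimal length in $X$. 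Thus each $r\in J$ commutes with $w_{\ast}$, so $w_{\ast}\cdot\alpha_{r}$ is a $(-1)$-eigenvector of $r$ on $V$ and hence a scalar multiple of $\alpha_{r}$; as $w_{\ast}$ preserves $\ip{\,}{\,}$ and $\ip{\alpha_{r}}{\alpha_{r}}=1$ we get $w_{\ast}\cdot\alpha_{r}=\pm\alpha_{r}$, and since $\alpha_{r}\in N(w_{\ast})$ by Lemma \ref{millie}(i) in fact $w_{\ast}\cdot\alpha_{r}=-\alpha_{r}$ for all $r\in J$. Because $\{\alpha_{r}\mid r\in J\}$ is a basis of $V_{J}:=\sum_{r\in J}\rr\alpha_{r}$, this shows $w_{\ast}$ acts as $-1$ on $V_{J}$; and since $w_{\ast}$ centralises $W_{J}$, it acts as $-1$ on $\Phi_{J}\subseteq V_{J}$.

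The main step is then to show $N(w_{\ast})=\Phi_{J}^{+}$. One inclusion is immediate: $w_{\ast}$ sends $\Phi_{J}^{+}\subseteq V_{J}$ to $-\Phi_{J}^{+}=\Phi_{J}^{-}\subseteq\Phi^{-}$, so $\Phi_{J}^{+}\subseteq N(w_{\ast})$; as $N(w_{\ast})\subseteq\Phi^{+}$, equality amounts to the non-existence of $\beta\in N(w_{\ast})$ with $\beta\notin\Phi_{J}$. Suppose such a $\beta$ exists, and write $\beta=\sum_{r\in R}c_{r}\alpha_{r}$ with all $c_{r}\ge 0$; then $c_{s_{0}}>0$ for some $s_{0}\in R\setminus J$. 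I would expand
\[
w_{\ast}\cdot\beta=-\sum_{r\in J}c_{r}\alpha_{r}+\sum_{s\in R\setminus J}c_{s}\,(w_{\ast}\cdot\alpha_{s}),
\]
using $w_{\ast}\cdot\alpha_{r}=-\alpha_{r}$ for $r\in J$ and noting that, since $\ell(w_{\ast}s)>\ell(w_{\ast})$ for $s\notin J$, Lemma \ref{millie}(i) gives $w_{\ast}\cdot\alpha_{s}\in\Phi^{+}$, so each $w_{\ast}\cdot\alpha_{s}$ has non-negative coordinates. For $t\in R\setminus J$ the $\alpha_{t}$-coordinate of the right-hand side is therefore $\ge 0$, yet it is $\le 0$ because $w_{\ast}\cdot\beta\in\Phi^{-}$; hence the $\alpha_{t}$-coordinate of $\sum_{s\notin J}c_{s}(w_{\ast}\cdot\alpha_{s})$ is $0$ for every $t\notin J$, which (since $c_{s_{0}}>0$) forces $w_{\ast}\cdot\alpha_{s_{0}}\in V_{J}$. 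But $w_{\ast}$ acts as $-1$ on $V_{J}$, so $\alpha_{s_{0}}=w_{\ast}\cdot(w_{\ast}\cdot\alpha_{s_{0}})=-(w_{\ast}\cdot\alpha_{s_{0}})\in\Phi^{-}$ (as $w_{\ast}\cdot\alpha_{s_{0}}\in\Phi^{+}$), contradicting $\alpha_{s_{0}}\in\Phi^{+}$. So no such $\beta$ exists and $N(w_{\ast})=\Phi_{J}^{+}$.

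To conclude, Lemma \ref{millie}(ii) gives $\ell(w_{\ast})=|N(w_{\ast})|=|\Phi_{J}^{+}|$, which is finite, so $W_{J}$ is finite, with longest element $w_{J}$. Since elements of $W_{J}$ permute $\Phi^{+}\setminus\Phi_{J}^{+}$ while $w_{J}$ maps $\Phi_{J}^{+}$ into $\Phi^{-}$ (standard facts about parabolic subgroups, see \cite{humphreys}), $N(w_{J})=\Phi_{J}^{+}$, and Lemma \ref{lengthadd} then gives $\ell(w_{\ast}w_{J})=\ell(w_{\ast})+\ell(w_{J})-2|N(w_{\ast})\cap N(w_{J}^{-1})|=2|\Phi_{J}^{+}|-2|\Phi_{J}^{+}|=0$ (using that the longest element $w_{J}$ is an involution). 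Hence $w_{\ast}=w_{J}^{-1}=w_{J}\in W_{J}$, and by the first paragraph $w_{J}=w_{\ast}$ acts as $-1$ on $\Phi_{J}$; so $w_{\ast}$ is the desired conjugate of $w$. The part I expect to need the most care is the coordinate computation of the second paragraph, as that is precisely where the action of $w_{\ast}$ as $-1$ on the whole of $\Phi_{J}$ (not merely on the simple roots $\alpha_{r}$, $r\in J$) gets pinned down; everything else is either recorded above or is a standard fact about finite parabolic subgroups.
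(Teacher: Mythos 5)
Your proof is correct, but note that the paper itself offers no argument here: it simply cites Richardson's paper on conjugacy classes of involutions. What you have written is essentially a self-contained reconstruction of that result via the ``minimal length in the conjugacy class'' argument: take $w_{\ast}$ of least length in $X$, use Proposition \ref{length} together with minimality to show $w_{\ast}$ commutes with every $r$ in $J=\{r\in R\mid \ell(rw_{\ast})<\ell(w_{\ast})\}$ and negates each $\alpha_r$, then pin down $N(w_{\ast})=\Phi_J^+$ by the coordinate computation and conclude $w_{\ast}=w_J$. All the steps check out; the first paragraph is exactly the argument the paper itself deploys inside the proof of Theorem \ref{exc=0} (there applied to $z$ rather than to a minimal-length involution), and the coordinate argument in your second paragraph is sound. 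You do lean tacitly on a few standard facts that deserve a citation to \cite{humphreys}: that $\Phi_J=\Phi\cap V_J$ (needed to pass from $\beta\notin\Phi_J$ to $c_{s_0}>0$ for some $s_0\notin J$), that finiteness of $\Phi_J$ forces $W_J$ to be finite with a longest element, and that $N(w_J)=\Phi_J^+$ because $W_J$ permutes $\Phi^+\setminus\Phi_J^+$. With those references supplied, your argument proves slightly more than the theorem asks for --- namely that the minimal length representative of the class is itself the longest element of a standard parabolic acting as $-1$ on its root system --- which is a perfectly good, and arguably more informative, route than the bare citation in the paper.
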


\begin{proof} See Richardson \cite{richardson}.
\end{proof}\\

Next we give some easy properties of excess.

\begin{lemma}\label{basics} Let $ w \in \mathcal{W}$. Then the following
hold.
\begin{trivlist}
\item{(i)} If $w$ is an involution or the identity element, then $e(w) = 0$.
\item{(ii)} $e(w)$ is non-negative and even.%
 \item{(iii)} If $w = xy$ where $x$ and $y$ are involutions and $2|N(x) \cap N(y)| = e(w)$,
 then $(x,y)$ is a spartan pair for $w$.

\end{trivlist}\end{lemma}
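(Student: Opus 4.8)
The plan is to base everything on Lemma~\ref{lengthadd}, which becomes especially transparent for involutions. The first step I would take is to observe that if $x$ and $y$ are involutions with $w = xy$, then $y^{-1} = y$, so $N(x)\cap N(y^{-1}) = N(x)\cap N(y)$, and Lemma~\ref{lengthadd} yields
\[
\ell(x) + \ell(y) - \ell(w) \;=\; 2\,|N(x)\cap N(y)|.
\]
In particular, for every way of writing $w$ as a product of two involutions, the quantity $\ell(x)+\ell(y)-\ell(w)$ is a non-negative even integer.

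Parts (ii) and (iii) are then immediate. Since $w \in \mathcal{W}$, at least one such involution decomposition of $w$ exists, and $e(w)$ is by definition the minimum of the above non-negative even integers over all such decompositions; hence $e(w)$ is itself non-negative and even, which is (ii). For (iii), if $x,y$ are involutions with $w = xy$ and $2\,|N(x)\cap N(y)| = e(w)$, then the displayed identity gives $\ell(x)+\ell(y)-\ell(w) = e(w)$, which is exactly the condition for $(x,y)$ to be a spartan pair for $w$.

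For (i), suppose $w^{2} = 1$ (this also covers $w = 1$). Writing $w = w\cdot 1$ exhibits $w$ as a product of two involutions with $\ell(w) + \ell(1) - \ell(w) = 0$, so $e(w) \le 0$; combined with (ii) this forces $e(w) = 0$. I do not anticipate any genuine obstacle: the only point needing care is the routine check that, because $h = y$ is an involution, $h^{-1} = y$ in the application of Lemma~\ref{lengthadd} — and, if one wants the statement for Coxeter groups of infinite rank as needed for Theorem~\ref{exc=0}, the observation that $w$ together with any involution decomposition lies in a standard parabolic subgroup of finite rank, where Lemma~\ref{lengthadd} applies.
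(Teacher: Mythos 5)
Your proof is correct and takes essentially the same route as the paper's: all three parts rest on Lemma~\ref{lengthadd} specialised to involutions (where $N(x)\cap N(y^{-1}) = N(x)\cap N(y)$), and part (i) uses the decomposition $w = w\cdot 1$. The extra remark about reducing to a finite-rank standard parabolic is a sensible precaution but not something the paper spells out here.
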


\begin{proof} If $w$ is an involution or $w = 1$, then $w = w1$ with $w^2 = 1 = 1^2$,
whence $e(w) =0$. For (ii), suppose $x^2 = y^2 = 1$
and $xy = w$. Then, using Lemma \ref{lengthadd}, $\ell(w) = \ell(x)
+ \ell(y) - 2|N(x) \cap N(y)|$ and hence $\ell(x) + \ell(y) -
\ell(w)$ is even and (ii) follows. Part (iii) is also immediate from Lemma \ref{lengthadd}.
\end{proof}\\

We now have the tools needed for the proof of Theorem \ref{exc=0},
but before continuing with this we calculate the excess of the
element $(1 2 \cdots n)$ of $\sym(n)$. The aim of this is to show
that there are Coxeter groups in which elements may have
arbitrarily large excess. Before stating our next result we
require some notation. For $q$ a rational number $\lfloor
q\rfloor$ denotes the floor of $q$ (that is, the largest integer
less than or equal to $q$), and $\lceil q \rceil$ denotes the
ceiling  of $q$ (that is, the smallest integer
greater than or equal to $q$).\\

Let $n \geq 2$. Then $\sym(n)$ is isomorphic to the Coxeter group
$W(A_{n-1})$ of type $A$. If $W = W(A_{n-1}) \cong \sym(n)$, then
we set $R = \{(12), (23), \ldots, (n-1 \;\; n)\}$ and the set of
positive roots is $\Phi^+ = \{e_i - e_j \; | \; 1 \leq i < j \leq n\}$.
An alternative formulation of $\ell(w)$ for $w \in W$ in this case
is $\ell(w) = |\{(i,j) \; | \; 1 \leq i < j \leq n, w(i) > w(j)\}|$. For
$0 \leq k \leq n-1$, let $s_k$ be the longest element of
$\sym(\{1,2,\ldots, k\})$ and let $t_k$ be the longest element of
$\sym(\{k+1, k+2, \ldots, n\})$. A straightforward calculation shows that%
\begin{eqnarray*} s_k &=& (1\;\; k)(2\;\; k-1)\cdots (\lfloor \textstyle\frac{k}{2}\rfloor \;\; \lceil
\textstyle\frac{k}{2}\rceil + 1); {\mbox{ and}}\\
t_k &=& (k+1 \;\;\; n)(k+2 \;\;\; n-1)\cdots (\lfloor
\textstyle\frac{n-k}{2}\rfloor + k  \;\;\;\; \lceil
\textstyle\frac{n-k}{2}\rceil + k + 1).\end{eqnarray*} Note that
$s_0 = s_1 = t_{n-1} = 1$. Finally, for $0 \leq k \leq n-1$, set $y_k = s_kt_k$.\\

\begin{prop} \label{12..n} Let $w = (12\cdots n) \in W = W(A_{n - 1}) \cong \sym(n)$. Put
$\I_w = \{x \in W \; | \; x^2 = 1, w^{x} = w^{-1}\}.$ Then
\begin{trivlist}
\item{(i)} $\I_w = \{ y_k \; | \; 0 \leq k \leq n-1\}$.%
\item{(ii)} If $n$ is odd, then $(wy_{(n-1)/2}, y_{(n-1)/2})$ is
a spartan pair for $w$.%
\item{(iii)} If $n$ is even, then $(wy_{n/2}, y_{n/2})$ and
$(wy_{n/2-1}, y_{n/2-1})$ are both spartan pairs for $w$. %
\item{(iv)} $e(w) = \lfloor \frac{(n-2)^2}{2}\rfloor$.
\end{trivlist}
\end{prop}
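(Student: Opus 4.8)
The plan is to establish (i) first, then use it to parameterize \emph{all} factorizations of $w$ into two involutions, and finally read off (ii)--(iv) from one length computation together with an elementary minimization.

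For (i), the key observation is that conjugating the $n$-cycle $w=(1\,2\,\cdots\,n)$ by a permutation $g$ produces the cycle $(g(1)\,g(2)\,\cdots\,g(n))$. Hence $x\in\I_w$ (so $xwx^{-1}=w^{-1}$) forces the cyclic sequence $(x(1),\dots,x(n))$ to coincide with $w^{-1}=(1\,n\,n{-}1\,\cdots\,2)$, i.e. $x(j)\equiv c-j+1\pmod n$ for some constant $c\in\{1,\dots,n\}$; conversely every permutation of this form is visibly an involution and satisfies $w^x=w^{-1}$. So $\I_w$ consists of exactly these $n$ permutations. On the other hand, the displayed formulas for $s_k,t_k$ show that $y_k$ sends $j\mapsto k+1-j$ for $j\le k$ and $j\mapsto n+k+1-j$ for $j>k$, which in both ranges reduces to $j\mapsto k+1-j\pmod n$; thus each $y_k$ is one of the $n$ permutations just described, and the $y_k$ are pairwise distinct (for instance $y_k(1)=k$ for $k\ge1$ while $y_0(1)=n$). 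Therefore $\I_w=\{y_k\mid 0\le k\le n-1\}$, proving (i).

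Next, if $w=xy$ with $x^2=y^2=1$ then $w^y=y(xy)y=yx=(xy)^{-1}=w^{-1}$, so $y\in\I_w$, say $y=y_k$, whence $x=wy_k$; conversely $(wy_k)^2=w(y_kwy_k)=ww^{-1}=1$ using $y_kwy_k=w^{-1}$, so each pair $(wy_k,y_k)$ really is a product of two involutions. Hence the factorizations of $w$ into two involutions are precisely the pairs $(wy_k,y_k)$, $0\le k\le n-1$, and $e(w)=\min_{0\le k\le n-1}\bigl(\ell(wy_k)+\ell(y_k)-\ell(w)\bigr)$. A routine inversion count then gives $\ell(w)=n-1$, $\ell(y_k)=\binom k2+\binom{n-k}2$ (since $y_k$ reverses each of the blocks $\{1,\dots,k\}$ and $\{k+1,\dots,n\}$), and $\ell(wy_k)=\binom{k+1}2+\binom{n-k-1}2$ (since $wy_k$ reverses $\{1,\dots,k+1\}$ and $\{k+2,\dots,n\}$), so that $\ell(wy_k)+\ell(y_k)-\ell(w)=2\bigl[\binom k2+\binom{n-k-1}2\bigr]$. (Alternatively one computes the sets $N(y_k)$ and $N(wy_k)$ explicitly and applies Lemma \ref{lengthadd}.)

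It remains to minimize $g(k)=\binom k2+\binom{n-k-1}2$ over $0\le k\le n-1$. Since $g(k)-g(k+1)=n-2-2k$, the sequence $g$ strictly decreases for $k<(n-2)/2$ and strictly increases for $k>(n-2)/2$: if $n$ is odd this yields a unique minimum at $k=(n-1)/2$, giving the spartan pair in (ii) and $e(w)=2g\bigl(\tfrac{n-1}2\bigr)=\tfrac{(n-1)(n-3)}2$; if $n$ is even the minimum is attained exactly at $k=\tfrac n2-1$ and $k=\tfrac n2$, giving the two spartan pairs in (iii) and $e(w)=2g\bigl(\tfrac n2\bigr)=\tfrac{(n-2)^2}2$. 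In both cases $e(w)=\lfloor(n-2)^2/2\rfloor$, which is (iv). The only step with genuine content is (i), namely matching the abstractly defined set $\I_w$ with the explicit list $\{y_k\}$; once that is in hand, parts (ii)--(iv) are bookkeeping: one inversion count and the minimization of the (discretely) convex function $g$.
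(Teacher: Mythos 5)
Your proof is correct and follows essentially the same route as the paper: identify $\I_w$ with the $n$ elements $y_k$, observe that every factorization of $w$ into two involutions has the form $(wy_k,y_k)$, and minimize the resulting quadratic in $k$. The only differences are cosmetic --- you prove (i) by explicitly solving $w^x=w^{-1}$ on cycle notation where the paper uses the bound $|\I_w|\le |C_W(w)|=n$ together with the $n$ distinct elements $y_k$, and you compute $\ell(wy_k)$ by a direct inversion count (in effect noting $wy_k=y_{k+1}$) where the paper computes $|N(y_k)\cap N(w)|$ and applies Lemma \ref{lengthadd}.
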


\begin{proof} It is easy to check that $y_k \in
\I_w$ whenever $0 \leq k \leq n-1$. Since $|I_w| \leq |C_W(w)| =
n$, part (i) follows immediately.\\
Suppose $y \in \I_w$. Write $\eps_y(w) = \ell(wy) + \ell(y) -
\ell(w)$, so that $e(w) = \min\{\eps_y(w) \; | \; y \in \I_w\}$.
We have \begin{eqnarray*}%
\eps_y(w) &=& \ell(wy) + \ell(y) - \ell(w)\\
&=& \ell(w) + \ell(y) - 2|N(y)\cap N(w)| + \ell(y) - \ell(w)\\
&=& 2(\ell(y) - |N(y)\cap N(w)|)\end{eqnarray*}%
Now $N(w) = \{e_i-e_n \; | \; 1\leq i <n\}$. Therefore $$|N(y) \cap N(w)|
= |\{i \; | \; y(i)  > y(n) \}| = n-y(n).$$ %
 Let $y = y_k$ for some $0 \leq k \leq n-1$. We have $\ell(y_k) =
\ell(s_k) + \ell(t_k) = \frac{k(k-1)}{2} +
\frac{(n-k)(n-k-1)}{2}$. Moreover $|N(y_k) \cap N(w)| = n-y_k(n) =
n - (k+1)$. Therefore
\begin{eqnarray*} \eps_{y_k}(w) &=&
2(\ell(y_k) - |N(y_k)\cap N(w)|)\\
&=& k(k-1) + (n-k)(n-k-1) - 2(n-k-1)\\
&=& 2k^2 - 2k(n-1) + n^2-3n+2\\
&=& 2\left(k-\textstyle\frac{(n-1)}{2}\right)^2 +
\textstyle\frac{1}{2}(n^2-4n+3)\\
&=& 2\left(k-\textstyle\frac{(n-1)}{2}\right)^2 + \textstyle\frac{1}{2}(n-2)^2
-\textstyle\frac{1}{2}.
\end{eqnarray*}
If $n$ is odd, then this quantity is minimal when $k=\frac{n-1}{2}$.
Hence part (ii) holds. In this case, \begin{eqnarray*} e(w) &=&
\eps_{y_{(n-1)/2}}(w) = \textstyle\frac{1}{2}(n-2)^2
-\textstyle\frac{1}{2}\\ &=&
\lfloor\textstyle\frac{(n-2)^2}{2}\rfloor.\end{eqnarray*}
If $n$ is even, then $\eps_{y_k}$ is minimal when $k=\frac{n}{2}$ or
$k=\frac{n}{2}-1$. Hence we obtain part (iii). In either case, $e(w)
= \textstyle\frac{1}{2}(n-2)^2$. Combining the odd and even cases we
see that $e(w) = \lfloor\textstyle\frac{(n-2)^2}{2}\rfloor$. \end{proof}\\

\section{Zero Excess in Conjugacy Classes}\label{zero}


\textbf{Proof of Theorem 1.1} Suppose that $W$ is a Coxeter group,
$w \in \mathcal{W}$ and $X$ is the $W$-conjugacy class of $w$. Now
$w = r_1 \cdots r_{\ell}$ for certain $r_i \in R$ and some finite
$\ell = \ell(w)$. So $w \in \langle r_1, \dots, r_{\ell} \rangle$.
Thus it suffices to establish the theorem for $W$ of finite rank.
Accordingly we argue by induction on $|R|$. Suppose $K \subsetneqq
R$. If $X \cap W_K \ne \emptyset$, then by induction there exists
$w^{\prime} \in X \cap W_K$ with $e_{W_K}(w^{\prime}) = 0$, whence
$e(w^{\prime}) = 0$ and we are done. So we may suppose that $X
\cap W_K = \emptyset$ for all $K \subsetneqq R$. That is,
$X$ is a cuspidal class of $W$. \\

Choose $w \in X$. If $w = 1$ or $w$ is an involution, then $e(w) =
0$ by Lemma \ref{basics}(i). Thus we may suppose $w = xy$ where
$x$ and $y$ are involutions. By Theorem \ref{involutions} we may
conjugate $w$ so that $y \in W_J$ for some $J \subseteq R$, with
$y$ acting as $-1$ on $\Phi_J$. Thus $y \in Z(W_J)$. Now choose
$z$ to have minimal length in $\{g^{-1}xg \; | \; g \in W_J\}$. So
we have $z =
g^{-1}xg$ for some $g \in W_J$.\\

Suppose for a contradiction that there exists $r \in J$ with
$\ell(zr) < \ell(z)$. Since $z$ and $r$ are involutions,
$$\ell(rz) = \ell((rz)^{-1}) = \ell(zr) < \ell(z).$$ Applying
Proposition \ref{length} yields that either $rzr = z$ or
$\ell(rzr) = \ell(z) -2 < \ell(z)$. Since $r \in W_J$, the latter
possibility would contradict the minimal choice of $z$. Hence $rzr
= z$. So $r\cdot (z\cdot a_r) = z\cdot (r\cdot a_r) = -z\cdot
a_r$. It is well known that the only roots $\beta$ for which
$r\cdot \beta = -\beta$ are $\alpha_r$ and $-\alpha_r$. Thus
$z\cdot \alpha_r = \pm \alpha_r$. By assumption $\ell(zr) <
\ell(z)$ and therefore $z\cdot \alpha_r \in \Phi^-$ by Lemma
\ref{millie}(i), whence $z\cdot \alpha_r = -\alpha_r$. Combining
this with $y\cdot \alpha_r = -\alpha_r$ we then deduce that
$zy\cdot \alpha_r = \alpha_r$. Then Theorem \ref{tits} gives that
$zy$ is in a proper parabolic subgroup of $W$. Noting that $zy =
g^{-1}xgy = g^{-1}xyg = g^{-1}wg$, as $g \in W_J$, we infer that
$X$ is not a cuspidal class, a contradiction. We conclude
therefore that $\ell(zr) > \ell(z)$ for all $r \in J$.
Consequently $N(z) \cap \Phi^+_J = \emptyset$. Since $N(y) =
\Phi_J^+$ we deduce that $N(z) \cap N(y) = \emptyset$ and hence,
using Lemma \ref{lengthadd}, that $\ell(zy) = \ell(z) + \ell(y)$.
Setting $w_{\ast} = zy = g^{-1}wg$ we have $w_{\ast} \in X$ and
$e(w_{\ast}) = 0$, so completing the proof of Theorem \ref{exc=0}.
\qed

\newpage


    \begin{center}{\Large\textbf{Corrigendum\\
    Involution Products in Coxeter Groups\\
    J. Group Theory 14 (2011), no.2, 251 - 259}}\\

      S.B. Hart and P.J. Rowley

\end{center}
\thispagestyle{empty}

In [1], Theorem 2.4 states a well-known result on Coxeter
groups which gives conditions under which the stabilizer of a
nonzero vector is a proper parabolic subgroup. However the
hypothesis of this result is incorrectly stated in our paper: it
holds for finite Coxeter groups but is not true in general for
infinite Coxeter groups. We are grateful to an anonymous referee of
a subsequent paper for pointing this out. As a consequence, the
proof of Theorem 1.1 in [1], which uses Theorem 2.4, is
incomplete. Here we complete the proof of Theorem 1.1
without recourse to Theorem 2.4.\\

Theorem 1.1 states that if $X$ is a strongly real conjugacy class of
a Coxeter group $W$ (not necessarily finite), then there exists
$w_{\ast} \in X$ such that $e(w_{\ast}) = 0$. That is to say, there
are involutions $\sigma$, $\tau$ of $W$ such that $w_{\ast} =
\sigma\tau$ and $\ell(w) = \ell(\sigma) + \ell(\tau)$. At the point
in the proof where Theorem 2.4 is used, we have established that
$zy$ is an element of $X$ where $z$ and $y$ are involutions with the
following properties. First, $y$ is the central involution of some
standard parabolic subgroup $W_J$ of $W$. The involution $z$ has the
property that $\ell(gzg^{-1}) \geq \ell(z)$ for all $g \in W_J$. It
follows that if $\ell(zr) < \ell(z)$ for
any $r \in J$, then $rzr = z$ and $z\cdot\alpha_r = -\alpha_r$.\\

Now let $K = \{r \in J: \ell(zr) < \ell(z)\}$.  From the above we
know that $z\cdot \alpha_r = -\alpha_r$ for all $r \in K$. If $K$ is
nonempty then, as $\Phi^+_K \subseteq N(z)$, $\Phi^+_K$ is finite.
Therefore $W_K$ has a unique longest element $w_K$, which is an
involution, and $N(w_K) = \Phi^+_K$. If $K = \emptyset$ we set $w_K
= 1$. In all cases, since $y$ is central in $W_J$ and $w_K \in W_J$,
we see that $w_Ky = yw_K$ is an involution. Moreover $zr = rz$ for
all $r \in K$, and thus $zw_K$ is also an involution. Let $\sigma =
zw_K$ and $\tau = w_Ky$. Then $\sigma\tau = zy \in X$. Moreover $z$
and $y$ both act as $-1$ on $\Phi^+_K$. Thus, by Lemma 2.2,
$$N(\sigma) = N(z) \setminus \left[-z\cdot N(w_K)\right] = N(z) \setminus N(w_K)$$
and $$N(\tau) = N(y) \setminus \left[ -y\cdot N(w_K)\right] = N(y) \setminus N(w_K)
= \Phi^+_J \setminus N(w_K).$$ Consider $r \in J$. If $r \in K$,
then $\alpha_r \in N(w_K)$ and so $\alpha_r \notin N(z) \setminus
N(w_K) = N(\sigma)$. On the other hand if $r \in J\setminus K$ then
by definition of $K$,  $\alpha_r \notin N(z)$ and hence $\alpha_r
\notin N(\sigma)$, which is after all a subset of $N(z)$. Hence for
all $r \in J$ we have $\alpha_r \notin N(\sigma)$. This implies that
$N(\sigma) \cap \Phi^+_J = \emptyset$, because every positive root
in $\Phi^+_J$ is a positive linear combination of some $\alpha_r, r
\in J$. But $N(\tau) \subseteq \Phi^+_J$ and therefore $N(\sigma)
\cap N(\tau) = \emptyset$. Hence, by Lemma 2.2, $\ell(\sigma\tau) =
\ell(\sigma) + \ell(\tau)$. Setting $w_{\ast} = \sigma\tau$ we have
$w^{\ast} \in X$ and $e(w_{\ast}) = 0$, so completing the proof of
Theorem 1.1.\hfill $\square$

\section*{References}
[1] S. B. Hart and P. J. Rowley {\em Involution Products in Coxeter Groups},
 J. Group Theory 14 (2011), no.2, 251 - 259.


\begin{thebibliography}{99}
\bibitem{titsref} N. Bourbaki. {\em Lie Groups and Lie Algebras Ch. 4--6} Springer-Verlag (2002). 
\bibitem{magma} J.J. Cannon and C. Playoust. {\em An Introduction to Algebraic Programming with \textsc{Magma}} [draft], Springer-Verlag (1997).
\bibitem{carter} R.W. Carter. {\em Conjugacy Classes in the Weyl
Group}, Compositio. Math. {\bf 25}, Facs. 1 (1972), 1--59.%

\bibitem{deodhar} V.V. Deodhar. {\em On the root system of a Coxeter
group}, Comm. Algebra 10 (1982), 611--630.



\bibitem{gkpf} M. Geck and G. Pfeiffer. {\em Characters of Finite
Coxeter Groups and Iwahori-Hecke Algebras}, LMS Monographs, New
Series 21, (2000).


\bibitem{humphreys} J.E. Humphreys. {\em Reflection Groups and Coxeter Groups}, Cambridge Studies in Advanced Mathematics,
 29 (1990).


\bibitem{richardson} R.W. Richardson. {\em Conjugacy classes of
involutions in Coxeter groups}, Bull. Austral. Math. Soc. 26 (1982),
1--15.
\bibitem{springer} T.A. Springer, {\em Some remarks on involutions
in Coxeter groups}, Comm. Algebra 10 (1982), 631--636.



\end{thebibliography}
\end{document}